\newtheorem{theorem}{Theorem}[section]
\newtheorem{corollary}[theorem]{Corollary}
\newtheorem{lemma}[theorem]{Lemma}
\newcommand{\DD}{\mathcal{D}}
\newcommand{\LL}{\mathcal{L}}
\newcommand{\RR}{\mathcal{R}}
\title{Regular Antilattices}
\author{Karin Cvetko-Vah \and 
Michael Kinyon \and  
Jonathan Leech \and  
Toma\v z Pisanski
}
\address[Cvetko-Vah]{Department of Mathematics \\
Faculty of Mathematics and Physics \\
University of Ljubljana \\ Jadranska 21, SI-1000 Ljubljana, Slovenia}
\address[Kinyon]{Department of Mathematics \\
University of Denver \\ Denver, CO 80208, USA}
\address[Leech]{Department of Mathematics \\
Westmont College \\ Santa Barbara, CA 93108, USA}
\address[Pisanski]{Faculty of Mathematics, Natural Sciences
and Information Technologies \\  
University of Primorska \\
Glagolja\v{s}ka 8 \\ SI-6000 Koper, Slovenia \\
and \\
Andrej Maru\v{s}i\v{c} Institute \\
University of Primorska \\
Muzejsi trg 2 \\ 6000 Koper, Slovenia} 
\date{\today}
\begin{document}
\maketitle
\begin{abstract} 
Antilattices $(S;\lor, \land)$ for which the Green's equivalences $\LL_{(\lor)}$, $\RR_{(\lor)}$, $\LL_{(\land)}$ and $\RR_{(\land)}$ are all congruences of the entire antilattice are studied and enumerated.
\end{abstract}

{\bf Keywords:} noncommutative lattice, antilattice, Green's equivalences, lattice of subvarieties, enumeration, partition, composition.

{\bf MSC: }06B75, 05A15,05A17,03G10,11P99.

\bigskip

\section{Introduction}
In the study of noncommutative  lattices, lattices still play an
important role. They are the commutative cases of the algebras being considered and
indeed play an important role in the general theory of that larger class of algebras. (As with rings, ``noncommutative'' is understood inclusively to mean \emph{not necessarily commutative}). But
also, typically, a second subclass of algebras exists that plays counterpoint to the subclass
of lattices. It has become common to refer to  their members  as
``antilattices.'' Typically they resist any kind of nontrivial commutative behavior. That
is, an instance of $xy = yx$ for a relevant binary operation can occur only when $x = y$. 
Antilattices, however, are not without their special charm. Indeed, they have been studied
in connection with magic squares and finite planes. (See \cite{Le05}.) In this paper we study the
class of regular antilattices (defined below). Basic  concepts, e.g. bands, quasilattices and preliminary  results of relevance are assembled in
the first section. Regular antilattices themselves are the focus of the next section. A closer look at their lattice of subvarieties occurs in the final section. The reader
seeking further  information on bands is referred to the presentations given in
Clifford and Preston \cite{Cl61} and in Howie \cite{Ho77}. For further background on skew lattices and
quasilattices,  see \cite{La02} and \cite{Le19}. The basic facts of universal algebras, and in particular varieties, may be found in the second chapter of \cite{Bu81}.

\section{Preliminary Concepts and Results}

A \emph{band} is a semigroup $(S; \cdot)$ for which all elements are idempotent, that is, $xx = x$
holds. A band is \emph{rectangular} if it satisfies the identity $xyz = xz$, or equivalently, $xyx = x$.
(As often occurs, if just a single binary operation is involved, its appearance is suppressed
in equations.) A \emph{semilattice} is a commutative band ($xy = yx$). Clearly, rectangular
semilattices form the class of trivial $1$-point bands. Indeed both classes are structural
opposites that play important roles in the general structure of bands. To see how and to
set the stage for further preliminaries requires the use of Green's relations, defined first
for bands.
\[
\begin{array}{llll}
\DD: & x\,\DD\,y & \text{ iff } &  \text{both } xyx=x  \text{ and } yxy = y; \\
\LL: & x\,\LL\,y & \text{ iff } &  \text{both } xy=x  \text{ and } yx = y; \\
\RR: & x\,\RR\,y & \text{ iff } &  \text{both } xy=y  \text{ and } yx = x. \\
\end{array}
\]
For bands, $\LL$ and $\RR$ commute under the usual composition of relations, with the common
outcome being $\DD$, i.e., $\LL\circ \RR = \RR\circ \LL = \RR\lor \LL = \DD$. Here $\RR \lor \LL$ denotes the join of the two
relations. Moreover, we have the following fundamental result of Clifford and McLean:

\begin{theorem}  Given a band $(S; \cdot)$, the relation $\DD$ is a congruence for which $S/\DD$ is the maximal semilattice image and each $\DD$-class of $S$ is a maximal rectangular subalgebra of $S$.  In brief, every band is a semilattice of rectangular bands.
\end{theorem}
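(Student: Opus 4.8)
The plan is to take the three clauses in the order: $\DD$ is a congruence, then $S/\DD$ is the maximal semilattice image, then each $\DD$-class is a maximal rectangular subalgebra, since the last two rest on the first. Reflexivity of $\DD$ is just idempotency ($xxx=x$) and symmetry is built into the definition; transitivity is immediate from the recorded identity $\DD=\LL\circ\RR=\RR\circ\LL$, as a composite of two commuting equivalence relations is again an equivalence relation. For the congruence property, note that the reverse (opposite) band is again a band and the condition ``$aba=a$ and $bab=b$'' is unchanged on passing to it, so it suffices to prove right compatibility: $a\Drel b$ implies $ac\Drel bc$ for every $c$. Spelled out, this is the pair of identities
\[
(ac)(bc)(ac)=ac,\qquad (bc)(ac)(bc)=bc
\]
to be derived from $aba=a$ and $bab=b$. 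I would prove these by the classical manipulation: expand a well-chosen occurrence of $a$ to $aba$ (or of $b$ to $bab$) using the hypotheses, observe that the enlarged word now contains a squared factor $uu$, collapse it to $u$ by idempotency of the product, and iterate until the word reduces to $ac$, respectively $bc$. This word calculation is the technical heart of the Clifford–McLean theorem and is exactly where the real work lies (see \cite{Ho77} or \cite{Cl61}); it genuinely requires the expand-then-contract trick, which is why $\LL$ and $\RR$ need not be congruences even though $\DD$ is.

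Granting that $\DD$ is a congruence, $S/\DD$ is idempotent because $S$ is, and it is commutative because $xy\Drel yx$: the word $(xy)(yx)(xy)$ reads $xyyxxy$, which collapses by $yy=y$ and $xx=x$ to $xyxy=(xy)^2=xy$, and symmetrically $(yx)(xy)(yx)=yx$. Hence $S/\DD$ is a semilattice. To see it is the \emph{maximal} one I would check that $\DD$ is the \emph{least} semilattice congruence: if $\theta$ is any congruence with $S/\theta$ a semilattice and $a\Drel b$, then in the commutative idempotent quotient $aba=a$ gives $\bar a=\overline{aba}=\bar a\,\bar b\,\bar a=\bar a\,\bar a\,\bar b=\bar a\,\bar b$, and likewise $\bar b=\bar b\,\bar a=\bar a\,\bar b$, so $\bar a=\bar b$ and $(a,b)\in\theta$. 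Thus $\DD\subseteq\theta$, and consequently every semilattice homomorphic image of $S$ factors through $S/\DD$.

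For the last clause, fix a $\DD$-class $D$. It is closed under the product: if $x,y\in D$ then $x\Drel y$, so $x(xy)x=xxyx=xyx=x$ and $(xy)x(xy)=xyxxy=xyxy=(xy)^2=xy$, whence $xy\Drel x$ and $xy\in D$; and for $x,y\in D$ the relation $xyx=x$ holds directly by the definition of $\DD$, so $D$ is a rectangular subalgebra. It is maximal: if $D\subseteq B$ with $B$ a rectangular subalgebra, pick $a\in D$ and any $b\in B$; since $B$ is rectangular, $aba=a$ and $bab=b$, so $a\Drel b$ and $b\in D$, i.e.\ $B=D$.

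The single step I expect to be the obstacle is the congruence property of $\DD$ — concretely, the two word identities $(ac)(bc)(ac)=ac$ and $(bc)(ac)(bc)=bc$ under $aba=a$, $bab=b$; everything else is bookkeeping once that is in hand.
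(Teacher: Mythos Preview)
The paper does not prove this theorem at all: it is quoted as ``the fundamental result of Clifford and McLean'' and left to the references \cite{Cl61}, \cite{Ho77}. So there is no proof in the paper to compare your attempt against.

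Taken on its own, your outline is the standard one and the parts you actually carry out are correct: the verification that $xy\Drel yx$, the argument that $\DD$ is the least semilattice congruence, and the closure and maximality of $\DD$-classes are all fine as written. You are also right to isolate the compatibility of $\DD$ with multiplication as the only real obstacle.

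One caution about that step. Your described mechanism --- expand an $a$ to $aba$ (or $b$ to $bab$), spot a square $uu$, contract, repeat --- does not work as stated on $acbcac$: a single such expansion produces $abacbcac$ or $acbabcac$, neither of which contains any square, and iterating the obvious expansions keeps cycling back. The textbook proofs in \cite{Ho77} and \cite{Cl61} do not in fact proceed by naive word rewriting; they go through the identification $\DD=\JJ$ and show that the principal ideal $S^1(ab)S^1$ equals $S^1aS^1\cap S^1bS^1$, from which the congruence property is immediate. Since you explicitly defer this step to those references, your sketch remains valid --- just be aware that the ``expand/collapse'' picture is a heuristic, not the actual argument.
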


	So what do rectangular bands look like?  First there are two basic cases.  A \emph{left-zero band} is a band $(L; \cdot)$ with the trivial composition: $xy = x$.  A \emph{right-zero band} is a band $(R; \cdot)$ with the trivial composition: $xy = y$.  In other words, we either have just a single $\LL$-class or just a single $\RR$-class.  Finally, there are products of both types, $L\times R$,  and to within isomorphism, that is it.  A finite rectangular band does form a rectangular grid consisting of rows that are $\RR$-classes and columns that are $\LL$-classes.

\[
\begin{array}{cccccc}
\bullet & \bullet & \bullet & \bullet & \bullet & \bullet \\
\bullet & \bullet & \bullet & \bullet & \bullet & \bullet \\
\bullet & \bullet & \bullet & \bullet & \bullet & \bullet\\
\bullet & \bullet & \bullet & \bullet & \bullet & \bullet
\end{array}
\]

The product $xy$ of elements $x$ and $y$ is the unique element in the row of $x$ and the column of $y$. Given a rectangular band $(S;\cdot)$ and $x$ in $S$, if $L$ denotes the $\LL$-class of $x$ and $R$ denotes the $\RR$-class of $S$, and $\varphi:L\times R\to S$ is defined by $\varphi(u,v)=uv\in S$, then $\varphi$ is an isomorphism of rectangular bands. Rectangular bands are precisely the bands that are \emph{anti-commutative} in that $xy=yx$ iff $x=y$.

While bands have a very simple local structure -- their rectangular $\DD$-classes --
it is not immediately clear how elements from different $\DD$-classes combine under
the binary operation.

A band is \emph{regular} if the relations $\LL$ and $\RR$ are both congruences.  Semilattices and rectangular bands are both regular.  In the semilattice case $\LL$ and $\RR$ reduce to the identity relation, so that regularity is trivial.  One might expect all bands to be regular, but that is not so.  In the rectangular case there is more: $\LL$ and $\RR$ commute under composition, not only with each other, but with every congruence $\theta$:  
\[
\LL\circ\theta = \theta\circ\LL = \theta\lor\LL \text{ and } \RR\circ\theta = \theta\circ\RR = \theta\lor\RR.
\]
 
	A \emph{double band} is an algebra $(S; \lor, \land)$ for which both reducts $(S; \lor)$ and $(S; \land)$ are bands.  A lattice is thus a double band where both $(S; \lor)$ and $(S; \land)$ are semilattices that jointly satisfy the standard \emph{absorption} identities for a lattice: $x\land(x\lor y) = x = x\lor(x\land y)$.  A very general class of noncommutative lattices is as follows.  A \emph{quasilattice} is a double band that satisfies the following (modified) absorption identities: 
\[
x\land(y\lor x\lor y)\land  x=  x  =  x\lor(y\land x\land y)\lor x.
\]

Note that if commutativity is assumed, both identities reduce to the absorption identities for a lattice.  

A \emph{skew lattice} is a noncommutative lattice that satisfies the dual absorption identities: 

\[
x\land(x\lor y)=  x  =  (x\lor y)\land x,
\]
\[
x\lor(x\land y)=  x  =  (x\land y)\lor x.
\]

A {skew lattice} is a quasilattice, but not conversely.  In a quasilattice, both operations share common $\DD$-classes that also form subalgebras, although on these classes both operations need not agree!  Clearly, for a quasilattice $(S; \lor, \land)$, $\DD$ is a congruence. Indeed, $S/\DD$ is the maximal lattice image of $S$.   This leads us to:
 
	An \emph{antilattice} is a double band $(S; \lor, \land)$ for which both reducts, $(S; \lor)$ and $(S; \land)$, are rectangular bands, i.e., satisfy the identity $xyz = xz$ or equivalently $xyx = x$.  An antilattice is trivially a quasilattice.  Conversely, each $\DD$-class of a quasilattice is a subalgebra that is an antilattice. If the antilattice is a skew lattice, it is also called a \emph{rectangular skew lattice}.  As an antilattice, it is characterized by $x\land y = y\lor x$.
 $\DD$-classes of skew lattices are always {rectangular skew lattices}. Similar to bands, a version of the Clifford-McLean Theorem holds: 

\begin{theorem}  Given a quasilattice $(S; \lor, \land)$, the relation $\DD$ is the same for both, $(S; \lor)$ and $(S; \land)$.  For this common congruence, the quotient algebra $S/\DD$ is the maximal lattice image and each $\DD$-class of S is a maximal sub-antilattice of $S$.  In brief, every quasilattice is a lattice of antilattices. (Compare Corollary 3 of \cite{La02}; see also \cite{Le19}.)
\end{theorem}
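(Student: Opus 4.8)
The plan is to deduce everything from the Clifford--McLean Theorem for bands applied in turn to the two reducts $(S;\lor)$ and $(S;\land)$, using the modified absorption identities only to reconcile the two pictures. Write $\DD_{(\lor)}$ and $\DD_{(\land)}$ for the two Green's $\DD$-relations.

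First I would show $\DD_{(\lor)}=\DD_{(\land)}$. If $x\Drel_{(\land)}y$, then $x\land y\land x=x$ and $y\land x\land y=y$, so substituting into the identity $x=x\lor(y\land x\land y)\lor x$ gives $x=x\lor y\lor x$, and the same identity with the roles of $x$ and $y$ exchanged gives $y=y\lor x\lor y$; hence $x\Drel_{(\lor)}y$. The dual computation, starting from $x\Drel_{(\lor)}y$ and using $x=x\land(y\lor x\lor y)\land x$, yields $x\Drel_{(\land)}y$. Thus the two relations coincide; denote the common relation by $\DD$. Since $\DD=\DD_{(\lor)}$ is a congruence of the band $(S;\lor)$ and $\DD=\DD_{(\land)}$ is a congruence of the band $(S;\land)$, it is a congruence of the double band $(S;\lor,\land)$.

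Next I would check that $S/\DD$ is a lattice and that it is the maximal lattice image. By Clifford--McLean, $(S/\DD;\lor)$ is the maximal semilattice image of $(S;\lor)$ and $(S/\DD;\land)$ is the maximal semilattice image of $(S;\land)$, so both reducts of $S/\DD$ are semilattices; it remains to verify the two lattice absorption identities. Writing $\bar x$ for the $\DD$-class of $x$ and using that $\lor$ and $\land$ are commutative and idempotent on $S/\DD$, the expression $\bar x\land(\bar y\lor\bar x\lor\bar y)\land\bar x$ simplifies to $\bar x\land(\bar x\lor\bar y)$, so the identity $x=x\land(y\lor x\lor y)\land x$ becomes $\bar x=\bar x\land(\bar x\lor\bar y)$, and dually $x=x\lor(y\land x\land y)\lor x$ becomes $\bar x=\bar x\lor(\bar x\land\bar y)$. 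Hence $S/\DD$ is a lattice. For maximality, if $\theta$ is any congruence of $(S;\lor,\land)$ with $S/\theta$ a lattice, then $(S/\theta;\lor)$ is a semilattice image of the band $(S;\lor)$, so $\DD=\DD_{(\lor)}\subseteq\theta$ by the maximality clause of Clifford--McLean, and $S/\theta$ is therefore a quotient of $S/\DD$.

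Finally I would identify the $\DD$-classes. Each $\DD$-class $D$ is simultaneously a $\DD_{(\lor)}$-class and a $\DD_{(\land)}$-class, hence closed under both operations, and by Clifford--McLean each reduct of $D$ is a rectangular band, so $D$ is a sub-antilattice of $S$. If $D\subseteq T\subseteq S$ with $T$ a sub-antilattice, then $(T;\lor)$ is a rectangular subalgebra of $(S;\lor)$ containing the maximal rectangular subalgebra $D$, forcing $T=D$; so $D$ is a maximal sub-antilattice. I do not anticipate a serious obstacle here: the real content is the single observation in the first step that the modified absorption identities are precisely strong enough to force $\DD_{(\lor)}=\DD_{(\land)}$ while still degenerating to ordinary absorption once the operations are commutative, and the remaining work is the routine bookkeeping just sketched.
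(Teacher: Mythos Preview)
Your argument is correct in every step: the key observation that the modified absorption identities force $\DD_{(\lor)}=\DD_{(\land)}$ is exactly the right one, and the remaining verifications (that $S/\DD$ inherits the ordinary absorption laws once commutativity holds, the maximality via Clifford--McLean, and the identification of $\DD$-classes as maximal sub-antilattices) all go through as you describe. The paper itself does not supply a proof of this theorem but simply cites Corollary~3 of Laslo--Leech and the survey \cite{Le19}; what you have written is essentially the standard argument underlying those references.
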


	Antilattices have been studied, not only due to their connection to quasilattices, but also in connection with magic squares and finite plains.  (See \cite{Le05}.)  
	
	Like quasilattices and semigroups, by definition antilattices do not have prescribed constants, thus making the empty set a viable subalgebra, but in so doing allow for the existence of a complete lattice of subalgebras for any given antilattice.
	
\section{Regular Antilattices}

	Given an antilattice $(S; \lor, \land)$, both reducts $(S; \lor)$ and $(S; \land)$ are {regular} in that $\LL_{(\lor)}$ and $\RR_{(\lor)}$ are congruences on $(S; \lor)$, and likewise $\LL_{(\land)}$ and $\RR_{(\land)}$ are congruences on $(S; \land)$.  The antilattice is \emph{regular} if all four equivalences are congruences for the whole algebra.  In general, a quasilattice $(S; \lor, \land)$ is \emph{regular} if $\LL_{(\lor)}$, $\RR_{(\lor)}$, $\LL_{(\land)}$ and $\RR_{(\land)}$ are congruences of $(S; \lor, \land)$.  Skew lattices are regular, but in general, quasilattices need not be regular.

\begin{theorem} Regular antilattices form a subvariety of the variety of antilattices.
\end{theorem}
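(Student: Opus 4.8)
The plan is to extract from the definition of regularity a finite list of identities whose models, among antilattices, are exactly the regular ones; Birkhoff's theorem then applies, since a class axiomatised by identities is a variety, and this one sits inside the (equationally defined) variety of antilattices, hence is a subvariety.

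The preliminary step is to record how each of the four relations behaves on its ``native'' rectangular reduct. In any rectangular band $(S;\cdot)$ one has $a\,\LL\,b$ iff $a=ab$, and $a\,\RR\,b$ iff $a=ba$; consequently the $\LL$-class of $b$ is $\{xb:x\in S\}$ and its $\RR$-class is $\{bx:x\in S\}$. Moreover, since rectangular bands are regular, $\LL$ and $\RR$ are already congruences of $(S;\cdot)$. Applied to the two reducts of an antilattice $(S;\lor,\land)$, this yields, for instance, $a\,\LL_{(\lor)}\,b\iff a=a\lor b$, that the $\LL_{(\lor)}$-class of $b$ is $\{x\lor b:x\in S\}$, and that $\LL_{(\lor)}$ is a congruence of $(S;\lor)$; and dually for $\RR_{(\lor)},\LL_{(\land)},\RR_{(\land)}$.

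The heart of the argument is turning ``$\LL_{(\lor)}$ is a congruence of $(S;\lor,\land)$'' into identities. Since $\LL_{(\lor)}$ is automatically compatible with $\lor$, it is a congruence of the whole algebra exactly when it is compatible with $\land$, i.e. when $a\,\LL_{(\lor)}\,b$ forces $a\land c\,\LL_{(\lor)}\,b\land c$ and $c\land a\,\LL_{(\lor)}\,c\land b$ for every $c$. But $a\,\LL_{(\lor)}\,b$ holds precisely when $b=x\lor a$ for some $x\in S$ (take $x=b$; conversely every $x\lor a$ is $\LL_{(\lor)}$-related to $a$), and $u\,\LL_{(\lor)}\,v\iff u=u\lor v$; substituting the term $x\lor a$ for $b$ therefore collapses this implication to the pair of genuine identities
\[
(x\lor a)\land c \;\approx\; \bigl((x\lor a)\land c\bigr)\lor(a\land c),
\qquad
c\land(x\lor a)\;\approx\;\bigl(c\land(x\lor a)\bigr)\lor(c\land a).
\]
The remaining three relations are treated the same way: for $\RR_{(\lor)}$ one uses $a\,\RR_{(\lor)}\,b\iff b=a\lor x$ for some $x$ together with $u\,\RR_{(\lor)}\,v\iff u\lor v=v$, and for $\LL_{(\land)},\RR_{(\land)}$ one swaps the roles of $\lor$ and $\land$. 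This produces six more identities, so the regular antilattices are exactly the antilattices satisfying these eight identities (not claimed independent), and by Birkhoff's theorem this class is a variety, contained in the variety of antilattices — a subvariety, as required.

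The step I expect to need the most care is the passage from the quasi-identity ``$a\,\LL_{(\lor)}\,b\Rightarrow a\land c\,\LL_{(\lor)}\,b\land c$'' to an honest identity: this works solely because the hypothesis ``$a\,\LL_{(\lor)}\,b$'' is equivalent to ``$b$ lies in the image of the unary term $x\mapsto x\lor a$'', so the implication reduces to the single instance in which $b$ is literally that term (and for $\RR$ one uses $x\mapsto a\lor x$, for $\land$ the $\lor/\land$-duals). Everything else is routine bookkeeping: noting that compatibility of each relation with its own operation is automatic (this is precisely ``rectangular bands are regular''), unwinding $u\,\LL_{(\lor)}\,v$ and $u\,\RR_{(\lor)}\,v$ into single equations, and iterating over the four relations and two operations. (Alternatively, one could avoid identities and check directly that the class of regular antilattices is closed under homomorphic images, subalgebras and products; but the explicit identities will be convenient later when the lattice of subvarieties is examined.)
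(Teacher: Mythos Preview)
Your proof is correct and follows essentially the same approach as the paper: both parametrise the $\LL_{(\lor)}$-class of $x$ as $\{u\lor x:u\in S\}$, note that $\LL_{(\lor)}$ is already a congruence on $(S;\lor)$, and then rewrite compatibility with $\land$ as identities in the free variables $u,x,y$ (or $x,a,c$ in your notation), repeating for the other three relations. The only cosmetic difference is that you exploit the rectangular-band fact $u\,\LL\,v\iff u=uv$ to record two identities per relation (eight in all), whereas the paper writes out both defining equations of $\LL$ and so lists four identities for $\LL_{(\lor)}$ (sixteen in all); the two lists are equivalent over antilattices.
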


\begin{proof} We show that antilattices for which $\LL_{(\lor)}$ is a congruence form a subvariety.  To begin, in an antilattice $S$, $x \,\LL_{(\lor)}\, u\lor x$ holds for all $u, x\in S$, and conversely, if $x\,\LL_{(\lor)} \, x'$, then trivially, $x'= x'\lor x$.  Since $\LL_{(\lor)}$ is already a congruence on the reduct $(S; \lor)$, for $\LL_{(\lor)}$ to be a congruence on $(S; \land)$, precisely the following identities need to hold:
\[
	(y\land x) \lor [y\land(u\lor x)] = y\land x \, \, \& \,\,   [y\land(u\lor x)] \lor (y\land x) = y\land(u\lor x)
\]
	and
\[
		(x\land y) \lor [(u\lor x)\land y] = x\land y \, \, \& \,\,   [(u\lor x)\land y] \lor (y\land x) = (u\lor x)\land y.
\]
Thus this class of antilattices indeed forms a subvariety.  Similar remarks verify the same claim for $\RR_{(\lor)}$, $\LL_{(\land)}$ and $\RR_{(\land)}$.  The theorem now follows.
\end{proof}

Since $\LL$ and $\RR$ commute under composition with all congruences on a rectangular band, $\LL_{(\lor)}\circ\LL_{(\land)} = \LL_{(\land)}\circ \LL_{(\lor)}$; $\LL_{(\lor)}\circ \RR_{(\land)} = \RR_{(\land)}\circ \LL_{(\lor)}$; $\RR_{(\lor)}\circ \LL_{(\land)} = \LL_{(\land)}\circ \RR_{(\lor)}$; and $\RR_{(\lor)}\circ \RR_{(\land)} = \RR_{(\land)}\circ\RR_{(\lor)}$ hold for regular antilattices.  All four outcomes are thus congruences on the antilattice, and indeed form the join congruences of the respective pairs of congruences. 

	An antilattice $(S; \lor, \land)$ is \emph{flat} if the reduct $(S; \lor)$ is either a left-$0$ semigroup ($x\lor y = x$) or a right-$0$ semigroup ($x\lor y = y$), and likewise the reduct $(S; \land)$ is either a left-$0$ semigroup ($x\land y = x$) or a right-$0$ semigroup ($x\land y = y$).  That is, for each operation, either $\DD = \LL$ or $\DD = \RR$.  Clearly there are $4$ distinct classes of flat antilattices: 
\[\begin{array}{l}
	\text{the class } \mathbf{A}_{\LL \LL} \text{ of all antilattices where } x\lor y = x = x\land y.  \\
	\text{the class } \mathbf{A}_{\LL \RR} \text{ of all antilattices where } x\lor y = x \text{ but } x\land y = y.\\
	\text{the class } \mathbf{A}_{\RR \LL} \text{ of all antilattices where } x\lor y = y \text{ but } x\land y = x. \\
	\text{the class } \mathbf{A}_{\RR \RR} \text{ of all antilattices where } x\lor y = y = x\land y.
\end{array}
\]
Clearly \emph{each class is a subvariety of regular antilattices}.  What is more: 

\begin{lemma} Flat antilattices $S$ and $T$ of the same class are isomorphic if and only if they have the same cardinality.  When the latter is the case, an isomorphism is given by any bijection between $S$ and $T$.
\end{lemma}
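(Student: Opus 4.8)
The plan is to exploit the fact that in a flat antilattice \emph{both} operations are coordinate projections whose precise form is dictated entirely by the class, not by any further structure; consequently any bijection between two flat antilattices of the same class is automatically a homomorphism.

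The forward implication is immediate: an isomorphism of algebras is in particular a bijection of the underlying sets, so isomorphic flat antilattices have equal cardinality, and nothing more need be said. For the converse, suppose $|S| = |T|$ and fix an arbitrary bijection $f \colon S \to T$. Since $f$ is already bijective, it suffices to verify that $f$ preserves $\lor$ and $\land$, after which $f$ is an isomorphism. Take the class $\mathbf{A}_{\LL \LL}$ first: by definition $a \lor b = a$ and $a \land b = a$ for all elements $a,b$ of any member of this class, so for $x,y \in S$ we have $f(x \lor y) = f(x) = f(x) \lor f(y)$ and $f(x \land y) = f(x) = f(x) \land f(y)$, applying the defining identities in $S$ on the left-hand sides and in $T$ on the right-hand sides. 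The three remaining classes are handled verbatim: replace $a \lor b = a$ by $a \lor b = b$ in $\mathbf{A}_{\RR \LL}$ and $\mathbf{A}_{\RR \RR}$, and $a \land b = a$ by $a \land b = b$ in $\mathbf{A}_{\LL \RR}$ and $\mathbf{A}_{\RR \RR}$. In every case $f$ preserves both operations, so $f$ is the desired isomorphism, which also establishes the final sentence of the statement.

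There is no genuine obstacle here, since the argument is just an unwinding of the defining identities; the only matters worth a word of care are the degenerate cases $|S| = |T| \le 1$ (including $S = T = \emptyset$), where the unique bijection is vacuously or trivially a homomorphism, and the remark -- already implicit in the definition of ``flat'' -- that the identities $x \lor y = x$ (resp.\ $=y$) and $x \land y = x$ (resp.\ $=y$) do force each reduct to be a left- or right-zero band, hence rectangular, so that $S$ and $T$ really are antilattices in the first place.
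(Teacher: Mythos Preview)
Your proof is correct. The paper actually states this lemma without proof, treating it as self-evident; your argument supplies exactly the routine verification one would expect, namely that each operation in a flat antilattice is a fixed coordinate projection determined by the class, so every bijection automatically respects both operations.
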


\begin{theorem}[Decomposition Theorem] Every nonempty regular antilattice $(S; \lor, \land)$ factors into the direct product $S_{\LL \LL}\times  S_{\LL \RR} \times  S_{\RR \LL} \times  S_{\RR \RR}$ of its four maximal flat images, one from each class above, with the respective factors being unique to within isomorphism.
\end{theorem}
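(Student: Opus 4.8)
The plan is to build the decomposition by two rounds of binary direct decomposition along Green's relations, and then to recognize the four factors as intrinsic quotients of $S$, which forces uniqueness. Put
\[
\rho_{\LL\LL}=\RR_{(\lor)}\vee\RR_{(\land)},\quad
\rho_{\LL\RR}=\RR_{(\lor)}\vee\LL_{(\land)},\quad
\rho_{\RR\LL}=\LL_{(\lor)}\vee\RR_{(\land)},\quad
\rho_{\RR\RR}=\LL_{(\lor)}\vee\LL_{(\land)},
\]
each a congruence of the regular antilattice $S$. Since $\RR_{(\lor)}\subseteq\rho_{\LL\LL}$, the $(\lor)$-reduct of $S/\rho_{\LL\LL}$ is a homomorphic image of the left-zero band $(S;\lor)/\RR_{(\lor)}$ and so is again left-zero; likewise $\RR_{(\land)}\subseteq\rho_{\LL\LL}$ makes $(\land)$ left-zero on $S/\rho_{\LL\LL}$, so $S/\rho_{\LL\LL}\in\mathbf{A}_{\LL \LL}$, and symmetrically $S/\rho_{\LL\RR}\in\mathbf{A}_{\LL \RR}$, $S/\rho_{\RR\LL}\in\mathbf{A}_{\RR \LL}$, $S/\rho_{\RR\RR}\in\mathbf{A}_{\RR \RR}$. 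Each is also the \emph{maximal} flat image of $S$ in its class: if $S/\theta\in\mathbf{A}_{\LL \LL}$, then whenever $x\,\RR_{(\lor)}\,y$ we have $x\lor y=y$ in $S$ but $x\lor y=x$ in $S/\theta$, so $\RR_{(\lor)}\subseteq\theta$, and likewise $\RR_{(\land)}\subseteq\theta$, whence $\rho_{\LL\LL}\subseteq\theta$; and similarly in the other three cases.

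For existence, recall that $(S;\lor)$ is a rectangular band and that $\LL_{(\lor)},\RR_{(\lor)}$ are congruences of the whole antilattice by regularity; the rectangular-band facts of Section~2 give $\LL_{(\lor)}\cap\RR_{(\lor)}=\Delta$ and $\LL_{(\lor)}\circ\RR_{(\lor)}=\RR_{(\lor)}\circ\LL_{(\lor)}=S\times S$, so $\LL_{(\lor)}$ and $\RR_{(\lor)}$ are complementary permuting factor congruences and $S\cong(S/\LL_{(\lor)})\times(S/\RR_{(\lor)})$ via $x\mapsto(x\LL_{(\lor)},x\RR_{(\lor)})$. Both factors are again regular antilattices (regular antilattices form a variety), now with $(\lor)$ one-sided trivial. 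Applying the same argument to the $(\land)$-reducts of these two factors and composing the isomorphisms yields $S\cong X_{1}\times X_{2}\times X_{3}\times X_{4}$, where each $X_{i}$ arises from $S$ by collapsing one $(\lor)$-Green's relation and then one $(\land)$-Green's relation; inspecting which one-sided trivial laws survive each quotient shows that the $X_{i}$ are flat antilattices falling into the four distinct classes, one apiece, and that the composite isomorphism is $x\mapsto(x\sigma_{1},\dots,x\sigma_{4})$ for the corresponding factor congruences $\sigma_{i}$ of $S$.

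Finally, let $S\cong T_{1}\times T_{2}\times T_{3}\times T_{4}$ be \emph{any} direct decomposition into flat antilattices, one from each class; I claim this recovers the quotients $S/\rho_{\bullet\bullet}$ up to isomorphism (so in particular $\sigma_{i}$ agrees with the appropriate $\rho_{\bullet\bullet}$, and the $X_{i}$ of the previous step are the asserted maximal flat images). Identify $S$ with the product. Green's relations distribute over direct products, and in a flat antilattice each of $\RR_{(\lor)},\RR_{(\land)}$ is $\Delta$ or $S\times S$ according to the relevant one-sided trivial law; since $\Delta$ and the universal relation permute with every congruence, joins too distribute over the product here, so $\rho_{\LL\LL}$, read off on $\prod_{i}T_{i}$, equals $\prod_{i}\bigl(\RR_{(\lor)}(T_{i})\vee\RR_{(\land)}(T_{i})\bigr)$, which is $\Delta$ in exactly the coordinate whose factor lies in $\mathbf{A}_{\LL \LL}$ and universal in the others. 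Hence $S/\rho_{\LL\LL}$ is isomorphic to that factor, and likewise $S/\rho_{\LL\RR},S/\rho_{\RR\LL},S/\rho_{\RR\RR}$ are isomorphic respectively to the $\mathbf{A}_{\LL \RR}$-, $\mathbf{A}_{\RR \LL}$-, and $\mathbf{A}_{\RR \RR}$-factors, which is the claimed uniqueness.

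There is no single deep step here; the real work is bookkeeping — tracking the one-sided trivial laws correctly through two successive quotients so as to land in all four flat classes, and, in the uniqueness argument, checking that forming joins commutes with the direct product for the (two-sided trivial, hence everywhere permuting) Green's congruences of the flat factors. Everything rests only on the rectangular-band material of Section~2 together with the definition of regularity; the preceding Lemma — flat antilattices of one class are isomorphic exactly when they have the same cardinality — confirms that the factors are pinned down no more finely than up to isomorphism and also supplies the concrete isomorphisms.
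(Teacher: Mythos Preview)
Your proof is correct and follows essentially the same route as the paper: two successive binary factorizations along the Green's congruences, first for $\lor$ and then for $\land$, landing on the same four quotients $S/(\RR_{(\lor)}\vee\RR_{(\land)})$, etc. Your write-up is in fact more complete than the paper's, which records only the two-step factorization and the identification of the factors; you additionally supply the verification that each quotient lies in the correct flat class, the maximality of each flat image, and a genuine uniqueness argument via the coordinatewise computation of the $\rho_{\bullet\bullet}$ on an arbitrary flat product---points the paper's proof leaves implicit.
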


\begin{proof} The factorization is obtained by first factoring with respect to say $\lor$:
\[
S \cong S/\RR_{(\lor)} \times S/\LL_{(\lor)}
\]
to get two factors for which the $\lor$-operation is flat.  Then similarly factor both factors further with respect to the relevant $\RR_{(\land)}$ and $\LL_{(\land)}$ congruences to get four flat factors:
\[
S\cong S_{\LL \LL}\times  S_{\LL \RR} \times  S_{\RR \LL} \times  S_{\RR \RR}
\]
where: $S_{\LL \LL}= S/(\RR_{(\lor)}\lor\RR_{(\land)})$; $S_{\LL \RR} = S/(\RR_{(\lor)}\lor\LL_{(\land)})$; $S_{\RR \LL} = S/(\LL_{(\lor)}\lor\RR_{(\land)})$ and $S_{\RR \RR} = S/(\LL_{(\lor)}\lor\LL_{(\land)})$.
\end{proof}


Further factorization can take place.  But first, given a positive integer $n$, let $\mathbf{n}_{\LL  \LL}$, $\mathbf{n}_{ \LL  \RR}$, $\mathbf{n}_{ \RR  \LL}$ and $\mathbf{n}_{ \RR  \RR}$ denote the relevant flat antilattices on the set $\{1, 2, 3,\dots, n\}$.  This leads us to the following finite version of the Decomposition Theorem:

\begin{theorem}\label{th:fact-n} Let $(S; \lor, \land)$ be a nonempty finite regular antilattice with the above factorization $S_{\LL \LL} \times S_{\LL  \RR} \times  S_{\RR \LL} \times S_{\RR \RR}$.  If ${n}_{ \LL  \LL} = |S_{\LL \LL }|$, ${n}_{ \LL  \RR} = |S_{\LL  \RR} |$, etc., then 
\[
S  \cong  \mathbf{n}_{ \LL  \LL} \times \mathbf{n}_{ \LL  \RR} \times \mathbf{n}_{ \RR  \LL} \times \mathbf{n}_{ \RR  \RR}.
\]
\end{theorem}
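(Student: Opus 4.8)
The plan is to read this off directly from the Decomposition Theorem together with the Lemma on flat antilattices, so the argument is essentially bookkeeping. First I would invoke the Decomposition Theorem to write $S \cong S_{\LL \LL} \times S_{\LL \RR} \times S_{\RR \LL} \times S_{\RR \RR}$, where each factor $S_{XY}$ is a flat antilattice lying in the corresponding class $\mathbf{A}_{XY}$. Since $S$ is nonempty and finite, each factor is nonempty and finite: if some factor were empty the whole product would be empty, and $|S|$ is exactly the product of the four cardinalities. Hence each $n_{XY} = |S_{XY}|$ is a well-defined positive integer, and $\mathbf{n}_{XY}$ is by definition the flat antilattice of class $\mathbf{A}_{XY}$ carried by $\{1,2,\dots,n_{XY}\}$.

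Next, for each of the four index pairs $XY \in \{\LL \LL,\, \LL \RR,\, \RR \LL,\, \RR \RR\}$, the algebras $S_{XY}$ and $\mathbf{n}_{XY}$ are flat antilattices of the \emph{same} class with the \emph{same} cardinality $n_{XY}$. By the Lemma they are therefore isomorphic, and moreover any bijection between their underlying sets is an isomorphism. I would fix one such isomorphism $\varphi_{XY}\colon S_{XY} \to \mathbf{n}_{XY}$ for each of the four pairs.

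Finally I would assemble these into a single map. The coordinatewise product $\varphi_{\LL \LL} \times \varphi_{\LL \RR} \times \varphi_{\RR \LL} \times \varphi_{\RR \RR}$ is an isomorphism of the corresponding direct products, since a product of isomorphisms of algebras of a fixed signature is again an isomorphism. Composing it with the isomorphism $S \cong S_{\LL \LL} \times S_{\LL \RR} \times S_{\RR \LL} \times S_{\RR \RR}$ supplied by the Decomposition Theorem yields $S \cong \mathbf{n}_{\LL \LL} \times \mathbf{n}_{\LL \RR} \times \mathbf{n}_{\RR \LL} \times \mathbf{n}_{\RR \RR}$, which is the assertion.

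There is no genuine obstacle here: the one point to state carefully is that the Decomposition Theorem delivers the four flat factors only up to isomorphism, so it is legitimate to replace each $S_{XY}$ by the concrete representative $\mathbf{n}_{XY}$ of its isomorphism type — and that this representative depends on nothing but the cardinality is precisely what the Lemma gives. Finiteness enters only to guarantee that the $n_{XY}$ are (finite) positive integers; the statement is really a notational repackaging of the Decomposition Theorem once flat antilattices have been classified up to isomorphism by their class and cardinality.
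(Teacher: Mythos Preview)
Your proposal is correct and matches the paper's intent: the theorem is stated there without an explicit proof, being an immediate consequence of the Decomposition Theorem together with the Lemma that flat antilattices of the same class and cardinality are isomorphic. Your write-up simply spells out this implicit argument.
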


Clearly these four parameters characterize $(S; \lor, \land)$.  It is also clear that factorization can continue on each of the four factors.  For instance say $|S_{\LL \LL} | = 180 = 4\times 5\times 9$.  Then we have $S_{\LL \LL} \cong (\mathbf 2_{\LL \LL})^2 \times (\mathbf 3_{\LL \LL})^2\times  \mathbf 5_{\LL \LL}$.  For the trivial $1$-point algebra we just have $1$. 

\begin{corollary}\label{cor:dir-irr} A regular antilattice $(S; \lor, \land)$ is directly irreducible iff $|S|$ is either $1$ or a prime.  Every finite regular antilattice of order $>1$ thus factors into a direct product  of finitely many flat antilattices of prime order.
\end{corollary}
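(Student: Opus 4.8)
The plan is to reduce everything to elementary arithmetic of the single numerical invariant $|S|$, leaning on the finite Decomposition Theorem (Theorem~\ref{th:fact-n}) and the preceding Lemma on flat antilattices. Two facts should be recorded at the outset. First, since regular antilattices form a subvariety, the class is closed under direct products, so any direct product of flat antilattices of prime order is itself a (regular) antilattice and is a legitimate candidate factorization. Second, the underlying set of a direct product is the cartesian product of the underlying sets, so $|A\times B|=|A|\cdot|B|$; in particular a direct product of two antilattices each of cardinality $\ge 2$ has composite cardinality.

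For the ``if'' direction, suppose $|S|$ is $1$ or a prime, and that $S\cong A\times B$. Then $|A|\cdot|B|=|S|$, and since $|S|$ has no nontrivial factorization, one of $|A|,|B|$ equals $1$, i.e.\ one of the factors is the one-element algebra. Hence $S$ is directly irreducible.

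For the ``only if'' direction I would argue the contrapositive and simultaneously obtain the final assertion of the Corollary. Let $S$ be a finite regular antilattice of order $>1$. By Theorem~\ref{th:fact-n}, $S$ is already a direct product of four flat antilattices $\mathbf n_{\LL\LL},\mathbf n_{\LL\RR},\mathbf n_{\RR\LL},\mathbf n_{\RR\RR}$ whose cardinalities multiply to $|S|$, so it suffices to factor each flat factor into prime-order pieces. If $\mathbf n_{XX}$ is a flat antilattice of class $\mathbf A_{XX}$ with $n$ composite, write $n=ab$ with $a,b\ge 2$; then $\mathbf a_{XX}\times\mathbf b_{XX}$ again lies in $\mathbf A_{XX}$ (each $\mathbf A_{XX}$ is a variety, hence closed under products) and has cardinality $ab=n$, so by the Lemma it is isomorphic to $\mathbf n_{XX}$. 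Iterating this splitting until every factor has prime cardinality exhibits each $\mathbf n_{XX}$, and therefore $S$ itself, as a direct product of finitely many flat antilattices of prime order. If in addition $|S|$ is composite, this product has at least two factors, so grouping one factor against the product of the remaining ones displays $S$ as a direct product of two antilattices each of cardinality $\ge 2$; hence $S$ is not directly irreducible.

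I do not anticipate a real obstacle: the substantive work is already done in Theorem~\ref{th:fact-n} and the Lemma, and the only step needing a moment's care is the refinement of a single flat factor $\mathbf n_{XX}$, which rests on closure of the variety $\mathbf A_{XX}$ under products together with the cardinality criterion of the Lemma. The one point where one should be deliberate about conventions is the low end of the range: the one-element algebra is directly irreducible by the computation in the ``if'' direction (every decomposition has both factors trivial), and --- if one admits the empty antilattice at all --- $\emptyset\cong\emptyset\times A$ for every regular antilattice $A$, so $\emptyset$, whose cardinality $0$ is neither $1$ nor prime, is not directly irreducible, consistently with the statement; one may simply restrict the Corollary to nonempty $S$ to sidestep this remark.
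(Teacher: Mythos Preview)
Your argument is correct and is exactly the deduction the paper has in mind: the corollary is stated without proof as an immediate consequence of Theorem~\ref{th:fact-n} together with the Lemma on flat antilattices, and you have spelled this out carefully, including the refinement of each flat factor via closure of the varieties $\mathbf A_{XX}$ under products. One small omission is that the first sentence of the corollary is not restricted to finite $S$, while your ``only if'' direction treats only finite $S$; the infinite case follows by the same mechanism, since by the general Decomposition Theorem some flat factor $S_{XX}$ is infinite and then the Lemma gives $S_{XX}\cong S_{XX}\times S_{XX}$ (as $\kappa\cdot\kappa=\kappa$ for infinite cardinals $\kappa$), so no infinite regular antilattice is directly irreducible.
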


\begin{corollary}  A regular antilattice $(S; \lor, \land)$ is subdirectly irreducible iff either $|S| = 1$ or $|S| = 2$.  Every  (finite) regular antilattice of order $>1$ is thus isomorphic to a subdirect product of (finitely) many flat antilattices of order $2$.
\end{corollary}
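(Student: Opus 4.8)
The plan is to derive both halves of the corollary from the Decomposition Theorem together with one observation about flat antilattices. Recall that an algebra is \emph{subdirectly irreducible} when, in every representation of it as a subdirect product, at least one coordinate projection is an isomorphism. The implication $(\Leftarrow)$ is then immediate: the one-element algebra is (vacuously) subdirectly irreducible; and a two-element algebra $S$ is subdirectly irreducible because, given a subdirect embedding $S\hookrightarrow\prod_i A_i$, not every coordinate map can be constant (else the embedding would land in a trivial product), and a surjective non-constant homomorphism out of a two-element algebra is a bijection, hence an isomorphism. So the content of the corollary is the converse: a regular antilattice with $|S|>2$ is never subdirectly irreducible --- and, for finite $S$, is a subdirect product of finitely many two-element flat antilattices, which is the corollary's closing assertion.

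The step I would isolate is the following strengthening of the Lemma above: \emph{for each of the four flat classes, any flat antilattice $F$ of that class with $|F|\ge 2$ is isomorphic to a subdirect product of copies of the two-element flat antilattice $\mathbf{2}$ of the same class.} Flatness does all the work. On $F$ each of $\lor$ and $\land$ is one of the two projections $xy=x$, $xy=y$, and the same is true on $\mathbf{2}$; consequently \emph{every} map of underlying sets $f\colon F\to\mathbf{2}$ is automatically a homomorphism of $(F;\lor,\land)$, since for each operation both sides of $f(x\ast y)=f(x)\ast f(y)$ collapse --- consistently, because $\mathbf{2}$ is of the same class --- either to $f(x)$ or to $f(y)$. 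So, choosing for each pair of distinct elements of $F$ a surjective such map separating them, we obtain a point-separating family of surjective homomorphisms $F\to\mathbf{2}$, i.e. a subdirect embedding $F\hookrightarrow\prod_I\mathbf{2}$; when $F$ is finite, $|F|=n$, one may take $|I|=n-1$ using the $n-1$ maps cut out by a linear order on $F$.

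It then remains to assemble. By the Decomposition Theorem, $S\cong S_{\LL\LL}\times S_{\LL\RR}\times S_{\RR\LL}\times S_{\RR\RR}$ with $n_c=|S_c|$; discard the one-element factors. Each surviving factor is, by the previous step, a subdirect product of copies of its two-element flat antilattice, and a direct product of subdirect products is again a subdirect product --- the projection of $\prod_k A_k$ onto a coordinate of $\prod_k\bigl(\prod_j B_{kj}\bigr)$ factors as $\prod_k A_k\to A_k\to B_{kj}$ and hence is surjective. Thus $S$ is a subdirect product of a family of two-element flat antilattices indexed by a set of size $N=\sum_c(n_c-1)$, finite when $S$ is; this gives the corollary's last sentence for $|S|>1$. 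A one-line arithmetic check finishes the equivalence: $N=0$ forces $|S|=\prod_c n_c=1$ and $N=1$ forces $|S|=2$, so $|S|>2$ yields $N\ge 2$; since every factor then has order $2<|S|$, no coordinate projection of this representation is an isomorphism, and $S$ is not subdirectly irreducible.

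I do not expect a genuine obstacle. The points needing care are modest: recording the routine fact that a direct product of subdirect products is a subdirect product; the bookkeeping $N\ge 2\iff|S|>2$; and fixing the convention under which the one-element algebra counts as subdirectly irreducible so that the stated equivalence holds verbatim. (Finiteness is used only to make the subdirect representation finite; the same construction works in general with a possibly infinite index set.) Finally, it is worth contrasting the result with Corollary~\ref{cor:dir-irr}: a flat antilattice of prime order $p\ge 3$ is directly irreducible but, by the key step above, far from subdirectly irreducible --- which is precisely the mechanism behind the present corollary.
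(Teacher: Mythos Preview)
The paper offers no proof for this corollary; it is stated without argument as a consequence of the Decomposition Theorem and Corollary~\ref{cor:dir-irr}. Your proposal therefore fills a genuine gap, and it does so correctly. The key observation---that every set-map from a flat antilattice $F$ into the two-element flat antilattice of the same class is automatically a homomorphism, because both operations on each side are coordinate projections---is exactly the right lever, and it is implicitly available in the paper via the remark (stated a few lines later) that on a flat antilattice every equivalence is a congruence. Your assembly step (a direct product of subdirect embeddings is a subdirect embedding) and the arithmetic $N=\sum_c(n_c-1)\ge 2 \iff |S|>2$ are both routine and correctly handled.

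One remark worth keeping: your caveat about the one-element algebra is on point. Under the convention of the paper's own reference \cite{Bu81}, the trivial algebra is \emph{not} subdirectly irreducible (it embeds in the empty product), so the ``iff'' as stated in the corollary uses a slightly nonstandard convention. That is an issue with the paper's phrasing, not with your argument; you were right to flag it rather than paper over it.
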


A sub-(pseudo)variety of regular antilattices is \emph{positive}, if it is not the sub-variety $\{\emptyset\}$.

\begin{corollary}\label{cor:lattice-var} The lattice of all positive subvarieties of regular antilattices is a Boolean algebra with $16$ elements and $4$ atoms: $\mathbf A_{\LL \LL}$, $\mathbf A_{\LL  \RR}$, $\mathbf A_{\RR \LL}$ and $\mathbf A_{\RR \RR}$. 
\end{corollary}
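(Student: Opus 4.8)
The plan is to exhibit an explicit lattice isomorphism between the lattice $\Lambda$ of positive subvarieties of regular antilattices and the power set $2^{J}$ of the four-element set $J=\{\LL\LL,\LL\RR,\RR\LL,\RR\RR\}$. Write $\mathbf 2_Z$ for the (unique, by the Lemma on flat antilattices) two-element flat antilattice of class $Z\in J$. The starting point is the preceding corollary: a regular antilattice is subdirectly irreducible iff it has one or two elements; moreover a two-element regular antilattice is automatically flat (a two-element rectangular band is either left- or right-zero), so the \emph{nontrivial} subdirectly irreducible regular antilattices are exactly $\mathbf 2_{\LL\LL},\mathbf 2_{\LL\RR},\mathbf 2_{\RR\LL},\mathbf 2_{\RR\RR}$. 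The candidate isomorphism is $\Phi\colon\Lambda\to 2^{J}$, $\Phi(V)=\{Z\in J:\mathbf 2_Z\in V\}$.

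First I would check that each $\mathbf A_Z$ equals $\mathrm{HSP}(\mathbf 2_Z)$ and is an atom of $\Lambda$. The inclusion $\mathrm{HSP}(\mathbf 2_Z)\subseteq\mathbf A_Z$ is immediate since $\mathbf A_Z$ is a variety (defined by its two flatness identities) containing $\mathbf 2_Z$; for the reverse inclusion, by Birkhoff's theorem $\mathbf A_Z$ is generated by its subdirectly irreducible members, which lie among $\mathbf 1,\mathbf 2_{\LL\LL},\dots,\mathbf 2_{\RR\RR}$, of which only $\mathbf 1$ and $\mathbf 2_Z$ satisfy the defining identities of $\mathbf A_Z$, and $\mathbf 1\in \mathrm H(\mathbf 2_Z)$; hence $\mathbf A_Z=\mathrm{HSP}(\mathbf 2_Z)$. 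It is then an atom: a positive subvariety strictly above the bottom element $\mathbf T=\mathrm{HSP}(\mathbf 1)$ contains a nontrivial algebra, hence a nontrivial subdirectly irreducible, and inside $\mathbf A_Z$ that can only be $\mathbf 2_Z$, forcing the subvariety to contain $\mathrm{HSP}(\mathbf 2_Z)=\mathbf A_Z$. The four $\mathbf A_Z$ are pairwise distinct (distinct classes disagree on at least one flatness identity), and the same argument shows every atom of $\Lambda$ is one of them. Finally, again by Birkhoff, an arbitrary positive subvariety $V$ is generated by $\mathbf 1$ together with the set $\{\mathbf 2_Z:Z\in\Phi(V)\}$ of its nontrivial subdirectly irreducibles, so
\[
V=\mathrm{HSP}\bigl(\{\mathbf 1\}\cup\{\mathbf 2_Z:Z\in\Phi(V)\}\bigr)=\bigvee_{Z\in\Phi(V)}\mathbf A_Z .
\]
In particular $\Lambda$ is finite and is generated as a lattice by its four atoms.

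The decisive step is to show the four atoms are \emph{independent}: $\mathbf A_Z\not\subseteq\bigvee_{W\in X}\mathbf A_W$ whenever $Z\notin X$. For this I would produce, for each class, an identity holding in the other three flat classes but failing in $\mathbf 2_Z$; a direct check on the four flat types yields
\begin{align*}
\epsilon_{\LL\LL}&\colon\ (x\lor y)\land y=y, & \epsilon_{\LL\RR}&\colon\ (x\land y)\lor x=x,\\
\epsilon_{\RR\LL}&\colon\ (x\lor y)\land x=x, & \epsilon_{\RR\RR}&\colon\ x\lor(x\land y)=x,
\end{align*}
each $\epsilon_Z$ being valid in every flat antilattice not of class $Z$ and failing in $\mathbf 2_Z$. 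Since $\epsilon_Z$ holds throughout each $\mathbf A_W$ with $W\neq Z$, it holds throughout $\bigvee_{W\in X}\mathbf A_W$ for any $X$ not containing $Z$, so $\mathbf 2_Z\notin\bigvee_{W\in X}\mathbf A_W$, and therefore $\mathbf A_Z\not\subseteq\bigvee_{W\in X}\mathbf A_W$. Consequently $\Phi\bigl(\bigvee_{W\in X}\mathbf A_W\bigr)=X$ for every $X\subseteq J$, so $\Phi$ is onto $2^J$; it is injective because $V=\bigvee_{Z\in\Phi(V)}\mathbf A_Z$ is recovered from $\Phi(V)$; and $U\subseteq V\iff\Phi(U)\subseteq\Phi(V)$ (forward by monotonicity of $\Phi$, backward since $U=\bigvee_{Z\in\Phi(U)}\mathbf A_Z\subseteq\bigvee_{Z\in\Phi(V)}\mathbf A_Z=V$). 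Thus $\Phi$ is a lattice isomorphism of $\Lambda$ onto the power set $2^J$, which is a Boolean algebra with $16$ elements whose atoms are the four singletons; pulling back, $\Lambda$ is a Boolean algebra with $16$ elements and atoms $\mathbf A_{\LL\LL},\mathbf A_{\LL\RR},\mathbf A_{\RR\LL},\mathbf A_{\RR\RR}$.

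I expect the only real obstacle to be the independence of the atoms in the third paragraph: everything else is bookkeeping built on the preceding corollary and Birkhoff's theorem, but the fact that, say, $\mathbf 2_{\LL\LL}$ is not already contained in the subvariety generated by the other three two-element flat antilattices must be witnessed concretely, which is precisely the role of the separating identities $\epsilon_Z$, verified by a short finite case analysis on the four flat types.
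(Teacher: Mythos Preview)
Your proof is correct and follows the route the paper intends: the corollary is stated without proof, as an immediate consequence of the preceding subdirect-irreducibility result together with Birkhoff's theorem, and the separating identities you supply (in particular your $\epsilon_{\RR\RR}$, $x\lor(x\land y)=x$) are exactly those the paper later invokes in Section~4 when describing the triple joins $\mathbf A_{\RR\RR}^C$, etc. In effect you have filled in the details the paper leaves implicit.
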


\begin{corollary} The lattice of all positive sub-pseudovarieties of finite regular antilattices is a Boolean algebra with $16$ elements and the four atoms as above, but with their respective classes now restricted to finite algebras: ${_f}\mathbf A_{\LL \LL}$, ${_f}\mathbf A_{\LL  \RR}$, ${_f}\mathbf A_{\RR \LL}$ and ${_f}\mathbf A_{\RR \RR}$.
\end{corollary}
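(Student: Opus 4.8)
The plan is to pin down the positive sub-pseudovarieties of finite regular antilattices explicitly and then read off the Boolean structure. Throughout, $XY$ ranges over the four patterns $\LL\LL,\LL\RR,\RR\LL,\RR\RR$; for $T\subseteq\{\LL\LL,\LL\RR,\RR\LL,\RR\RR\}$ let $\mathbf{A}_{T}$ be the join (in the lattice of positive subvarieties) of the atoms $\mathbf{A}_{XY}$ with $XY\in T$, taking $\mathbf{A}_{\emptyset}$ to be the trivial variety, and let ${}_{f}\mathbf{A}_{T}$ be the class of finite members of $\mathbf{A}_{T}$; by Corollary~\ref{cor:lattice-var} the classes $\mathbf{A}_{T}$ are exactly the positive subvarieties. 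First I would record the routine facts that the finite members of any variety form a pseudovariety (closure under homomorphic images, subalgebras and finite products is inherited) which is positive whenever the variety is, since it contains the one-element algebra; in particular each ${}_{f}\mathbf{A}_{T}$ is a positive sub-pseudovariety. Next, by the Decomposition Theorem, $\mathbf{A}_{T}$ consists precisely of those regular antilattices isomorphic to a product of flat antilattices all of whose types lie in $T$, so that ${}_{f}\mathbf{A}_{T}$ comprises the empty algebra together with the algebras $\prod_{XY\in T}\mathbf{n}_{XY}$ with each $n_{XY}\ge 1$. Because $2$ is prime, such a product has exactly two elements only when a single factor is a two-element flat antilattice of type in $T$ and the rest are trivial; since $\mathbf{2}_{\LL\LL},\mathbf{2}_{\LL\RR},\mathbf{2}_{\RR\LL},\mathbf{2}_{\RR\RR}$ are pairwise non-isomorphic (on two elements each of the two operations of an antilattice is left- or right-zero, and left-zero and right-zero differ there), the two-element members of ${}_{f}\mathbf{A}_{T}$ are exactly the $\mathbf{2}_{XY}$ with $XY\in T$; the one point here beyond inspection is that $\mathbf{2}_{XY}\notin\mathbf{A}_{T}$ when $XY\notin T$, which is the distinctness of the atoms recorded in Corollary~\ref{cor:lattice-var}. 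Hence $T$ is recovered from ${}_{f}\mathbf{A}_{T}$, and $T\mapsto{}_{f}\mathbf{A}_{T}$ is an order-embedding of the power set of $\{\LL\LL,\LL\RR,\RR\LL,\RR\RR\}$ into the poset of positive sub-pseudovarieties of finite regular antilattices.

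To complete the argument I would prove surjectivity: every positive sub-pseudovariety $\mathcal P$ of finite regular antilattices equals ${}_{f}\mathbf{A}_{T(\mathcal P)}$, where $T(\mathcal P)=\{XY:\mathbf{2}_{XY}\in\mathcal P\}$. I shall use that any map between flat antilattices of matching type is a homomorphism, which is clear since on a flat antilattice each operation simply returns one of its two arguments. For ${}_{f}\mathbf{A}_{T(\mathcal P)}\subseteq\mathcal P$: if $\mathbf{2}_{XY}\in\mathcal P$, then $\mathbf{n}_{XY}$ embeds, via any injection, into a power of $\mathbf{2}_{XY}$; since $\mathcal P$ already contains the empty and one-element algebras, closure under subalgebras and finite products puts every $\prod_{XY\in T(\mathcal P)}\mathbf{n}_{XY}$ into $\mathcal P$. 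For the reverse inclusion let $S\in\mathcal P$; by Theorem~\ref{th:fact-n}, $S\cong\mathbf{n}_{\LL\LL}\times\mathbf{n}_{\LL\RR}\times\mathbf{n}_{\RR\LL}\times\mathbf{n}_{\RR\RR}$, and whenever $n_{XY}\ge 2$ the composite of the projection $S\to\mathbf{n}_{XY}$ with any surjection $\mathbf{n}_{XY}\to\mathbf{2}_{XY}$ exhibits $\mathbf{2}_{XY}$ as a homomorphic image of $S$, forcing $XY\in T(\mathcal P)$; therefore the factors with types outside $T(\mathcal P)$ are trivial, and $S\in{}_{f}\mathbf{A}_{T(\mathcal P)}$.

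With both halves in hand, $T\mapsto{}_{f}\mathbf{A}_{T}$ is an order-isomorphism from the power set of $\{\LL\LL,\LL\RR,\RR\LL,\RR\RR\}$ onto the lattice of positive sub-pseudovarieties of finite regular antilattices; an order-isomorphism of lattices is a lattice isomorphism, so this lattice is a Boolean algebra with $2^{4}=16$ elements whose four atoms are the images of the singletons, namely ${}_{f}\mathbf{A}_{\LL\LL}$, ${}_{f}\mathbf{A}_{\LL\RR}$, ${}_{f}\mathbf{A}_{\RR\LL}$ and ${}_{f}\mathbf{A}_{\RR\RR}$. (Alternatively, one can obtain the statement straight from Corollary~\ref{cor:lattice-var} by a general principle: the variety of regular antilattices is locally finite --- being generated by its four two-element members --- and for any locally finite variety the passage to finite members is a lattice isomorphism from its subvarieties onto the sub-pseudovarieties of its finite members.) I expect the surjectivity step of the second paragraph to be the crux: that is where Theorem~\ref{th:fact-n} and the flatness of the direct factors really do the work, and where the degenerate factors ($n_{XY}\le 1$), the empty algebra, and the trivial pseudovariety must be tracked with some care.
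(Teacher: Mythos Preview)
Your argument is correct. The paper itself gives no proof for this corollary; it is stated as an immediate consequence of the preceding results (the Decomposition Theorem, Corollary~\ref{cor:dir-irr}, the subdirect irreducibility corollary, and Corollary~\ref{cor:lattice-var}), so there is nothing to compare against line by line. Your explicit bijection $T\mapsto{}_{f}\mathbf{A}_{T}$, with the surjectivity step driven by Theorem~\ref{th:fact-n} and the observation that every map between flat antilattices of the same type is a homomorphism, is a clean and complete justification. The parenthetical alternative you give at the end --- that the variety of regular antilattices is locally finite (being generated by the four two-element flats, equivalently by the subdirect irreducibility result), so that passage to finite members is a lattice isomorphism from subvarieties to sub-pseudovarieties --- is essentially the one-line reason the paper has in mind when it records the statement as a corollary of Corollary~\ref{cor:lattice-var}.
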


We will take a closer look at the positive subvarieties involved in the third section.

\emph{What can one say about the congruence lattice of a regular antilattice?} To begin
observe that the four classes of flat antilattces are mutually term equivalent with each
other and with the class of all left-zero semigroups and also the class of all right-zero
semigroups. In all these special cases the congruence lattice is precisely the full lattice
$\Pi(S)$ of all equivalences of the underlying set $S$. Following the situation for rectangular
bands in general, we have:

\begin{theorem} Let a nonempty regular antilattice $(S; \lor, \land)$ be factored into the direct product of its
four maximal flat images: $S_{\LL \LL}\times  S_{\LL \RR} \times  S_{\RR \LL} \times  S_{\RR \RR}$. Then the congruence lattice of $S$ is
given by $\Pi(S_{\LL \LL})\times  \Pi(S_{\LL \RR}) \times  \Pi(S_{\RR \LL}) \times  \Pi(S_{\RR \RR})$. That is, if the elements if $S$ are expressed
as $4$-tuples $(x, y, z, w)$ given by the factorization, then each congruence $\theta$ on $S$ can be
represented as a $4$-tuple $(\theta_{\LL \LL}, \theta_{\LL \RR}, \theta_{\RR \LL}, \theta_{\RR \RR})$ of congruences on each factor in that:
\[
(x, y, z, w) \,\theta\, (x', y', z', w')  \text{ iff } x\,\theta_{\LL \LL}\,x', y\,\theta_{\LL \RR}\,y', z\,\theta_{\RR \LL}\,z' \,\&\, w\,\theta_{\RR \RR}\,w'.
\]
Conversely, in this manner every such $4$-tuple of congruences defines a congruence on
the full antilattice $S$.
\end{theorem}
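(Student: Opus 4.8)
The plan is to exhibit a lattice isomorphism between $\mathrm{Con}(S)$ and $\Pi(S_{\LL\LL})\times\Pi(S_{\LL\RR})\times\Pi(S_{\RR\LL})\times\Pi(S_{\RR\RR})$. One direction is soft and general: if $\psi_1,\psi_2,\psi_3,\psi_4$ are congruences on the four flat factors, then the coordinatewise ``product'' relation is a congruence on the direct product $S$, and the map sending a $4$-tuple of congruences to this product relation is an order-embedding of $\mathrm{Con}(S_{\LL\LL})\times\cdots\times\mathrm{Con}(S_{\RR\RR})$ into $\mathrm{Con}(S)$; moreover, since each flat factor is term equivalent to a left- or a right-zero semigroup, $\mathrm{Con}(S_i)$ is exactly the lattice $\Pi(S_i)$ of all equivalences of its underlying set. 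So the entire content of the theorem is surjectivity: every $\theta\in\mathrm{Con}(S)$ is a product of this form, i.e. the direct product $S$ carries no ``skew'' congruences.

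To see this, I would introduce binary term operations $t_1,t_2,t_3,t_4$ that ``splice in a single coordinate''. Writing elements of $S$ as $4$-tuples $(x_1,x_2,x_3,x_4)$ along the factorization, each operation acts in the $i$-th factor as a left- or a right-zero multiplication, so that $a\lor b=(a_1,a_2,b_3,b_4)$ and $a\land b=(a_1,b_2,a_3,b_4)$. A direct check then shows that
\[
t_1(a,b)=(a\land b)\lor b,\quad t_2(a,b)=(b\land a)\lor b,\quad t_3(a,b)=b\lor(a\land b),\quad t_4(a,b)=b\lor(b\land a)
\]
have the property that $t_i(a,b)$ agrees with $b$ in every coordinate except the $i$-th, where it agrees with $a$. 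Given $\theta\in\mathrm{Con}(S)$, compatibility of $\theta$ with $t_i$ says: if $a\mathrel{\theta}b$, then for every $c\in S$ the tuple obtained from $c$ by resetting its $i$-th coordinate to $a_i$ is $\theta$-related to the one obtained by resetting its $i$-th coordinate to $b_i$. I would then define $\theta_i$ on $S_i$ by $x\mathrel{\theta_i}y$ iff for some --- equivalently, by the splicing property just noted, for every --- $c\in S$ the two tuples agreeing with $c$ off coordinate $i$ and carrying $i$-th coordinate $x$, resp.\ $y$, are $\theta$-related. Reflexivity and symmetry of $\theta_i$ are immediate; transitivity follows from the ``for every $c$'' formulation; so $\theta_i\in\Pi(S_i)=\mathrm{Con}(S_i)$.

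It then remains to check $\theta=\theta_1\times\theta_2\times\theta_3\times\theta_4$. The inclusion ``$\subseteq$'' is immediate from the definition of the $\theta_i$, read off coordinatewise. For ``$\supseteq$'', given $a,b$ with $a_i\mathrel{\theta_i}b_i$ for all $i$, one passes from $a$ to $b$ one coordinate at a time along the chain $(a_1,a_2,a_3,a_4),(b_1,a_2,a_3,a_4),(b_1,b_2,a_3,a_4),(b_1,b_2,b_3,a_4),(b_1,b_2,b_3,b_4)$: each consecutive pair is $\theta$-related by the splicing property applied with $c$ the current tuple, and transitivity of $\theta$ completes the argument. Hence $\theta\mapsto(\theta_1,\theta_2,\theta_3,\theta_4)$ is a two-sided inverse of the order-embedding above, which is therefore an order isomorphism, and hence a lattice isomorphism; the displayed $4$-tuple description of $\theta$ is precisely this correspondence. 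The one genuinely load-bearing step, and the only place the hypotheses enter in an essential way, is the verification that the $t_i$ do what is claimed --- that the flat, rectangular structure of the four factors really does furnish single-coordinate splicing terms; granting that, everything else is bookkeeping with transitivity.
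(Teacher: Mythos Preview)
Your argument is correct. The splicing terms do what you claim: with $a\lor b=(a_1,a_2,b_3,b_4)$ and $a\land b=(a_1,b_2,a_3,b_4)$ one checks directly that $t_1(a,b)=(a_1,b_2,b_3,b_4)$, $t_2(a,b)=(b_1,a_2,b_3,b_4)$, $t_3(a,b)=(b_1,b_2,a_3,b_4)$, $t_4(a,b)=(b_1,b_2,b_3,a_4)$, and the rest of the bookkeeping (the ``some $\Leftrightarrow$ every'' equivalence via $t_i(\,\cdot\,,c')$, the coordinatewise chain, and the order-isomorphism conclusion) is sound.

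As for comparison: the paper does not actually prove this theorem. It is stated immediately after the remark ``Following the situation for rectangular bands in general, we have:'' and is left without argument, the implicit justification being the analogous well-known fact for rectangular bands together with the observation (made just before the theorem) that the four flat classes are mutually term equivalent with left-zero and right-zero semigroups. Your proof supplies exactly the missing content: the explicit ``decomposition terms'' $t_i$ that witness, in the language of universal algebra, why the product $S_{\LL\LL}\times S_{\LL\RR}\times S_{\RR\LL}\times S_{\RR\RR}$ admits no skew congruences. So rather than differing from the paper's approach, your argument is a complete write-up of what the paper only gestures at.
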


In similar fashion:

\begin{theorem} Given a nonempty regular antilattice $S$ with factorization $S_{\LL \LL}\times  S_{\LL \RR} \times  S_{\RR \LL} \times  S_{\RR \RR}$, if
$a = |S_{\LL \LL}|$, $b = |S_{\LL \RR}|$, $c = |S_{\RR \LL}|$ and $d = |S_{\RR \RR}|$, then the number of subalgebras of $S$ is:
\[
1 + (2^a-1)(2^b-1)(2^c-1)(2^d-1).
\]
\end{theorem}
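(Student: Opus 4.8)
The plan is to reduce the enumeration to a statement about ``box'' subalgebras of the product $S_{\LL \LL}\times S_{\LL \RR}\times S_{\RR \LL}\times S_{\RR \RR}$ and then count boxes. Write elements of $S$ as $4$-tuples $(x_1,x_2,x_3,x_4)$ according to the factorization, with $x_1$ lying in the $\LL\LL$-factor, $x_2$ in the $\LL\RR$-factor, $x_3$ in the $\RR\LL$-factor, and $x_4$ in the $\RR\RR$-factor. Since both operations act coordinatewise on the product and each factor is flat, we have
\[ x\lor y=(x_1,x_2,y_3,y_4),\qquad x\land y=(x_1,y_2,x_3,y_4). \]
The heart of the argument is the claim that \emph{a nonempty subset $T\subseteq S$ is a subalgebra if and only if $T=T_1\times T_2\times T_3\times T_4$, where $T_i=\pi_i(T)$ denotes the projection of $T$ into the $i$-th factor.}

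For the ``if'' direction I would note that on a flat antilattice every nonempty subset is a subalgebra (both operations are projections), so any product of nonempty subsets of the four factors is automatically closed under the coordinatewise operations. For the ``only if'' direction, let $T$ be a nonempty subalgebra and set $T_i=\pi_i(T)$, so that $T\subseteq T_1\times T_2\times T_3\times T_4$ trivially. For the reverse inclusion, given $t=(t_1,t_2,t_3,t_4)$ with each $t_i\in T_i$, pick witnesses $u,v,w,z\in T$ with $u_1=t_1$, $v_2=t_2$, $w_3=t_3$, $z_4=t_4$; a short coordinatewise computation with the displayed formulas then shows
\[ (u\land v)\lor(w\land z)=(t_1,t_2,t_3,t_4)=t, \]
so $t\in T$ and hence $T=T_1\times T_2\times T_3\times T_4$. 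Here $u\land v$ already carries the correct first two coordinates, $w\land z$ carries the correct last two, and the outer $\lor$ keeps the left operand on coordinates $1,2$ and the right operand on coordinates $3,4$.

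Granting the claim, the rest is bookkeeping. The empty set is a subalgebra and contributes $1$. Every nonempty subalgebra is, uniquely, a box $T_1\times T_2\times T_3\times T_4$ with each $T_i$ a nonempty subset of the corresponding factor, and conversely each such box is a subalgebra; since for nonempty sets $T_1\times\cdots\times T_4=T_1'\times\cdots\times T_4'$ forces $T_i=T_i'$ for every $i$, this is a genuine bijection. A flat antilattice of cardinality $k$ has exactly $2^k-1$ nonempty subsets, so with $a=|S_{\LL \LL}|$, $b=|S_{\LL \RR}|$, $c=|S_{\RR \LL}|$, $d=|S_{\RR \RR}|$ there are $(2^a-1)(2^b-1)(2^c-1)(2^d-1)$ nonempty subalgebras, and the total count is $1+(2^a-1)(2^b-1)(2^c-1)(2^d-1)$.

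The only step needing any ingenuity is producing the term $(u\land v)\lor(w\land z)$ that assembles an arbitrary element of a box out of witnesses; this is the two-operation analogue of the familiar fact that a nonempty subalgebra of a rectangular band $L\times R$ is a full grid $L'\times R'$, which by itself yields the single-operation count $1+(2^{|L|}-1)(2^{|R|}-1)$. Everything else rests on the coordinatewise description of the operations furnished by the Decomposition Theorem and on elementary counting, so I expect no real obstacle beyond that one observation.
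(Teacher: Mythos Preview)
Your argument is correct. The paper does not actually give a proof of this theorem: it simply introduces the statement with the phrase ``In similar fashion'' (referring to the preceding result that congruences on $S$ factor as $4$-tuples of congruences on the flat factors), so your box characterization of nonempty subalgebras is exactly the kind of coordinatewise factorization the paper is gesturing at, and your explicit term $(u\land v)\lor(w\land z)$ cleanly supplies the one nontrivial step the paper omits.
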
 
One can ask: \emph{given a positive integer $n\geq 1$, up to isomorphism, how many nonisomorphic
regular antilattices are there of size $n$?} By Theorem \ref{th:fact-n} it is the number $\rho(n)$
of $4$-fold positive factorizations $abcd$ of $n$, where the order of the factors $a$, $b$, $c$, $d$ is
important. Here $a$ is the size of the $\LL \LL$-factor, $b$ is the size of the $\LL \RR$-factor, etc.

%

To begin, thanks to Corollary \ref{cor:dir-irr}, given the prime
power factorization $n=2^{e_2} 3^{e_3} 5^{e_5}\dots p_k^{e_{p_k}}$:
\[
\rho (n)=\rho(2^{e_2}) \rho(3^{e_3})\rho(5^{e_5})\dots \rho(p_k^{e_{p_k}}).
\]
Thus things can be reduced to calculating $\rho(p^e)$ for any prime power $p^e$ .

From a combinatorial perspective, this is equivalent to asking in how many distinct ways can $e$
identical balls  be distributed into $4$ labeled boxes. This question has a simple answer:

$$\rho(p^e) = {{e+3}\choose{3}}.$$

By putting all these together
we obtain the following closed formula for  $\rho (n)$:

\begin{theorem}
Let $n$ have the following  prime
power factorization $n=2^{e_2} 3^{e_3} 5^{e_5}\dots p_k^{e_{p_k}}$:
Then 
\[
\rho (n)= {{e_2+3}\choose{3}} {{e_3+3}\choose{3}} {{e_5+3}\choose{3}} \dots  {{e_{p_k}+3}\choose{3}}.
\]

\end{theorem}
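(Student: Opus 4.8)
The plan is to assemble the final formula from three ingredients already in hand, so the proof is essentially bookkeeping. First I would invoke the multiplicativity reduction already displayed just before the statement: since by Corollary~\ref{cor:dir-irr} a finite regular antilattice of order $n$ decomposes, via Theorem~\ref{th:fact-n}, into a direct product $\mathbf{a}_{\LL\LL}\times\mathbf{b}_{\LL\RR}\times\mathbf{c}_{\RR\LL}\times\mathbf{d}_{\RR\RR}$ with $abcd=n$, and since by Lemma~\ref{...} each flat factor is determined up to isomorphism by its cardinality alone, the count $\rho(n)$ equals the number of ordered $4$-tuples $(a,b,c,d)$ of positive integers with product $n$. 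A tuple of divisors with $abcd=n$ is the same thing as an independent choice, for each prime $p$ dividing $n$, of how to split the exponent $e_p$ among the four coordinates; hence $\rho(n)=\prod_p \rho(p^{e_p})$, exactly the displayed factorization.

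Next I would establish the prime-power case $\rho(p^e)=\binom{e+3}{3}$. Here the task is to count ordered $4$-tuples $(i,j,k,\ell)$ of nonnegative integers with $i+j+k+\ell=e$, equivalently the number of ways to place $e$ identical balls into $4$ labeled boxes, which the stars-and-bars argument gives as $\binom{e+4-1}{4-1}=\binom{e+3}{3}$. This is the only genuinely combinatorial step, but it is completely standard, so I would state it with a one-line justification rather than belabour it.

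Finally I would combine the two: writing $n=2^{e_2}3^{e_3}5^{e_5}\cdots p_k^{e_{p_k}}$ and substituting $\rho(p^{e_p})=\binom{e_p+3}{3}$ into $\rho(n)=\prod_p\rho(p^{e_p})$ yields
\[
\rho(n)=\binom{e_2+3}{3}\binom{e_3+3}{3}\binom{e_5+3}{3}\cdots\binom{e_{p_k}+3}{3},
\]
as claimed. (One should note the degenerate but harmless case $n=1$, where the product is empty and $\rho(1)=1$, matching the single one-point antilattice.)

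There is essentially no obstacle here; the substance of the theorem lives entirely in the earlier structural results (the Decomposition Theorem, the classification of flat antilattices by cardinality, and direct irreducibility at primes), and what remains is to translate the isomorphism-counting problem into a lattice-point count and apply stars and bars. The only point requiring a word of care is the independence across distinct primes, i.e.\ that a factorization $abcd=n$ corresponds bijectively to a tuple of exponent-factorizations $a=\prod p^{i_p}$, etc., which is immediate from unique factorization in $\mathbb{Z}$.
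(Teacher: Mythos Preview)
Your proposal is correct and follows essentially the same route as the paper: reduce $\rho(n)$ multiplicatively to the prime-power case via the direct-irreducibility corollary, evaluate $\rho(p^e)=\binom{e+3}{3}$ by the balls-in-boxes (stars and bars) count, and multiply. The paper presents this argument in the prose immediately preceding the theorem rather than as a separate proof environment, but the content is identical; your added remarks on unique factorization and the $n=1$ case are sound but not required.
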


One can ask a more general question:
In how many distinct ways can $e$ identical balls  be distributed into $k$ labeled boxes? 
This question has analogous answer, namely  ${{e+k-1}\choose{k-1}}$; see, for instance \cite{Wikipedia}.
We will use some of these in the next section.
Note that such ordered partition of integer $n$ into $k$ possibly empty parts is sometimes called \emph{composition}; see \cite{Wikipedia}.
%

\section{Semi-flat antilattices and other subvarieties}

An antilattice $(S; \lor, \land)$ is \emph{semi-flat} if either $(S; \lor)$ or $(S; \land)$ is flat. Flat antilattices
are trivially semi-flat. The class of all semi-flat antilattices consists of four distinct
subclasses that are not necessarily disjoint:
\[\begin{array}{l}
\mathbf A_{\LL\#}, \text{ the class of all antilattices } (S; \lor, \land) \text{ s.t. } (S; \lor) \text{ is a left-}0 \text{ band} \\
\mathbf A_{\RR\#}, \text{ the class of all antilattices } (S; \lor, \land) \text{ s.t. } (S; \lor) \text{ is a right-}0 \text{ band} \\
\mathbf A_{\#\LL}, \text{ the class of all antilattices } (S; \lor, \land) \text{ s.t. } (S; \land) \text{ is a left-}0 \text{ band} \\
\mathbf A_{\#\RR}, \text{ the class of all antilattices } (S; \lor, \land) \text{ s.t. } (S; \land) \text{ is a right-}0 \text{ band} 
\end{array}
\]

\begin{theorem} These four classes are subvarieties of the variety of regular antialttices.
\end{theorem}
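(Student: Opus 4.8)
The plan is to show that each of the four classes $\mathbf A_{\LL\#}$, $\mathbf A_{\RR\#}$, $\mathbf A_{\#\LL}$, $\mathbf A_{\#\RR}$ is an equational class, i.e.\ can be carved out of the variety of antilattices by adding finitely many identities, and then to invoke the fact established earlier (in the proof that regular antilattices form a subvariety) that the two Green's congruences of the flat operation become congruences for the whole algebra precisely when the corresponding (modified) absorption-type identities hold. Concretely, I would treat $\mathbf A_{\LL\#}$ first and let the other three follow by the evident left/right and $\lor/\land$ dualities.

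First I would observe that ``$(S;\lor)$ is a left-$0$ band'' is by definition the single identity $x\lor y = x$; adjoining this identity to the axioms of an antilattice defines a subclass closed under subalgebras, products and homomorphic images, hence a subvariety. So $\mathbf A_{\LL\#} = \{\,(S;\lor,\land) : S \text{ is an antilattice and } x\lor y = x\,\}$ is a subvariety of antilattices. The only thing that requires a word is that this subvariety lies \emph{inside} the variety of \emph{regular} antilattices, not merely inside antilattices: when $\lor$ satisfies $x\lor y = x$ we have $\LL_{(\lor)} = \DD_{(\lor)}$ (the whole algebra is one $\LL_{(\lor)}$-class) and $\RR_{(\lor)}$ is the identity relation, so both are automatically congruences of $(S;\lor,\land)$; and since $\LL_{(\land)}$, $\RR_{(\land)}$ commute under composition with every congruence on the rectangular band $(S;\land)$ — in particular with $\RR_{(\lor)} = \id$ and with $\LL_{(\lor)} = S\times S$ — the remark following the Decomposition-Theorem discussion shows $\LL_{(\land)}$ and $\RR_{(\land)}$ are congruences of the whole algebra as well. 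Thus every member of $\mathbf A_{\LL\#}$ is regular, so $\mathbf A_{\LL\#}$ is a subvariety of the variety of regular antilattices. The same argument with ``left'' replaced by ``right'' gives $\mathbf A_{\RR\#}$ (identity $x\lor y = y$), and interchanging the roles of $\lor$ and $\land$ gives $\mathbf A_{\#\LL}$ (identity $x\land y = x$) and $\mathbf A_{\#\RR}$ (identity $x\land y = y$).

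The main (and essentially only) obstacle is the subtlety just flagged: one must be careful not to confuse ``is a subvariety of antilattices'' with ``is a subvariety of \emph{regular} antilattices.'' The resolution is exactly the commutation property of $\LL$ and $\RR$ with arbitrary congruences on a rectangular band, which the paper has already recorded; once that is in hand, verifying that the four absorption-type identities from the proof that regular antilattices form a subvariety are satisfied is immediate, because two of the relevant congruences are trivial ($\id$ and the all-relation) in the semi-flat case. I would therefore write the proof for $\mathbf A_{\LL\#}$ in full and dispatch the remaining three cases with ``by left--right symmetry and $\lor$--$\land$ duality, the same holds for $\mathbf A_{\RR\#}$, $\mathbf A_{\#\LL}$ and $\mathbf A_{\#\RR}$,'' concluding that all four classes are subvarieties of the variety of regular antilattices.
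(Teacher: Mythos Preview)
Your overall structure matches the paper's proof exactly: show that $\mathbf A_{\LL\#}$ is cut out by the single identity $x\lor y=x$ (hence a subvariety of antilattices), then verify that every member is regular by noting that $\LL_{(\lor)}=\nabla$ and $\RR_{(\lor)}=\Delta$ are trivially congruences on $(S;\land)$, and finally argue that $\LL_{(\land)}$ and $\RR_{(\land)}$ are congruences on $(S;\lor)$; the other three classes follow by duality.

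The gap is in that last step. You invoke the fact that on a rectangular band $\LL$ and $\RR$ commute under composition with every congruence, and you cite ``in particular with $\RR_{(\lor)}=\id$ and $\LL_{(\lor)}=S\times S$.'' But commuting under \emph{composition of relations} with $\id$ and $\nabla$ is automatic for any relation whatsoever and says nothing about $\LL_{(\land)}$ or $\RR_{(\land)}$ being compatible with the operation $\lor$. The commutation remark in the paper is stated \emph{for} regular antilattices and is used to conclude that certain \emph{joins} of congruences are again congruences; it does not help you establish regularity in the first place.

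The correct (and simpler) argument is the one the paper gives: since $(S;\lor)$ is a left-zero band, $x\lor y$ depends only on $x$, so \emph{every} equivalence on $S$ is a congruence of $(S;\lor)$. In particular $\LL_{(\land)}$ and $\RR_{(\land)}$, being equivalences, are congruences on $(S;\lor)$ and hence on the whole algebra. Replace your commutation appeal with this one-line observation and the proof is complete.
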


\begin{proof} First observe that each is at least a subvariety in the variety of all antilattices. We
show this for $\mathbf A_{\LL\#}$, the other cases being similar. The identity characterizing $\mathbf A_{\LL\#}$ in the
variety of antilattices is clearly $x\lor y = x$. Thus $\mathbf A_{\LL\#}$ is indeed a subvariety of antilattices.
To see that all semi-flat antilattices are regular, again we need only consider, say, $\mathbf A_{\LL\#}$.
So let $(S; \lor, \land)$ be an antilattice for which $(S; \lor)$ is a left zero-band. Thus $\LL_{(\lor)}$ is the
universal equivalence $\nabla$ on $S$, and thus trivially a congruence on $(S; \land)$ while $\RR_{(\lor)}$ is the
identity equivalence and thus again trivially a congruence on $(S; \land)$. Next consider $\LL_{(\land)}$
and $\RR_{(\land)}$. Being congruences on $(S; \land)$, they are at least equivalences on $S$. But all
equivalences on $S$ are congruences on the left zero-band $(S; \lor)$, and thus $\LL_{(\land)}$ and $\RR_{(\land)}$ are
congruences on $(S; \lor, \land)$.
\end{proof}

Consider next the following diagram.

\begin{center}
\begin{tikzpicture}
\matrix (m) [matrix of math nodes, row sep=3.5em, column sep=2em, text height=1.5ex, text depth=0.25ex] 
{ \mathbf A_{\LL \LL} & & \mathbf A_{\LL \RR} \\
\mathbf A_{\RR \LL} & & \mathbf A_{\RR \RR} \\};
\path[.] (m-1-1) edge node[above] {$\mathbf A_{\LL\#}$} (m-1-3);
\path[.] (m-2-1) edge node[below] {$\mathbf A_{\RR\#}$} (m-2-3);

\path[.] (m-1-1) edge node[left] {$\mathbf A_{\# \LL}$} (m-2-1);
\path[.] (m-1-3) edge node[right] {$\mathbf A_{\# \RR}$} (m-2-3);
\end{tikzpicture}
\end{center}

The four flat varieties occupy the middle rectangle. If two distinct flat varieties
are adjacent on this rectangle, their join variety is the semi-flat variety labeling the 
line between them. But if they are diagonal opposites, we have the following:
\[\begin{array}{ll}
\mathbf A_{\LL \LL} \lor \mathbf A_{\RR \RR} = & \text{the subvariety of antilattices for which } x\lor y = x\land y. \\
\mathbf A_{\LL \RR} \lor \mathbf A_{\RR \LL} = & \text{the subvariety of antilattices for which } x\lor y = y\land x.
\end{array}
\]
These are the antilattice subvarieties that are, respectively, skew* lattices or skew lattices.

Next are the four double joins. Consider $\mathbf A_{\LL \LL} \lor \mathbf A_{\LL \RR}  \lor \mathbf A_{\RR \LL} $. It consists of regular
antilattices for which the $\mathbf A_{\RR \RR}$-factor is trivial. Since $\lor$ and $\land$ are idempotent, this reduces
to no nontrivial $\mathbf A_{\RR \RR}$-subalgebra occurring in the given antilattice. More briefly, no copy
of $\mathbf 2_{\RR \RR}$ occurs as a subalgebra. This is guaranteed by the identity $x \lor (x\land y) = x$ (that is
equivalent to the implication: $u \,\RR_{(\land)} \,v\Rightarrow u \,\LL_{(\lor)} \,v$) along with its $\lor-\land$ dual. This subvariety is, of course, the
Boolean complement $\mathbf A_{\RR \RR}^C$ of $\mathbf A_{\RR \RR}$.  The three other double joins are treated similarly to obtain:
$$  \mathbf A_{\LL \LL} \lor \mathbf A_{\LL \RR} \lor \mathbf A_{\RR \RR} = \mathbf A_{\RR \LL}^C$$
$$  \mathbf A_{\LL \LL} \lor \mathbf A_{\RR \LL} \lor \mathbf A_{\RR \RR} = \mathbf A_{\LL \RR}^C$$
$$  \mathbf A_{\LL \RR} \lor \mathbf A_{\RR \LL} \lor \mathbf A_{\RR \RR} = \mathbf A_{\LL \LL}^C$$

Finally, above these four lies the full variety of all regular antilattices
and just below the four flat cases lies the variety of trivial $1$-point
algebras.    The resulting lattice of all subvarieties of regular antilattices is, of course, isomorphic to the lattice of all subsets of any $4$-element set, which brings us back to Corollary \ref{cor:lattice-var}.

\begin{center}
\[
\xymatrix{
&&&& RA \ar@{-}[dlll]\ar@{-}[dl]\ar@{-}[dr]\ar@{-}[drrr]&&\\
& \RR\RR^C \ar@{-}[dd]\ar@{-}[ddrrrr]\ar@{-}[drrr] &&\RR\LL^C\ar@{-}[dddr]\ar@{-}[dd]\ar@{-}[ddll]  && \LL\RR^C\ar@{-}[dddl]\ar@{-}[ddrr]\ar@{-}[dd]  && \LL\LL^C\ar@{-}[dd]\ar@{-}[ddllll]\ar@{-}[dlll]\\
&&&& s &&\\
 &\LL^*  && ^*\RR && ^*\LL && \RR^* && \\
&&&& s^* &&\\
& \LL\LL \ar@{-}[uu]\ar@{-}[uurrrr]\ar@{-}[urrr] &&\LL\RR \ar@{-}[uuur]\ar@{-}[uull]\ar@{-}[uu] && \RR\LL \ar@{-}[uuul]\ar@{-}[uu]\ar@{-}[uurr]   && \RR\RR \ar@{-}[uu]\ar@{-}[uullll]\ar@{-}[ulll]\\
&&&& 1 \ar@{-}[ulll]\ar@{-}[ul]\ar@{-}[ur]\ar@{-}[urrr]&&\\
 }
\]
\end{center}

\bigskip

The Hasse diagram of the Boolean algebra is explained in the following table.

\bigskip

\bigskip
\begin{tabular}{l|l|l}
symbol & subvariety & number for $p^e$\\
\hline
\hline
$RA$ & regular antilattices& ${e+3}\choose{3}$\\
\hline
$\RR\RR^C$ & complement of $\RR\RR$& ${e+2}\choose{2}$\\ 
$\RR\LL^C$ & complement of $\RR\LL$& ${e+2}\choose{2}$\\ 
$\LL\RR^C$ & complement of $\LL\RR$& ${e+2}\choose{2}$\\ 
$\LL\LL^C$ & complement of $\LL\LL$& ${e+2}\choose{2}$\\
\hline 
$s$ & skew antilattices& ${{e+1}\choose{1}} = e+1$\\ 
$\LL^*$ & $\LL^*$ semi-flat& ${{e+1}\choose{1}} = e+1$\\ 
$^*\RR$ & $^*\RR$ semi-flat& ${{e+1}\choose{1}} = e+1$\\
$^*\LL$ & $^*\LL$ semi-flat& ${{e+1}\choose{1}} = e+1$\\
$\RR^*$ & $\RR^*$ semi-flat& ${{e+1}\choose{1}} = e+1$\\
$s^*$ & skew* antilattices& ${{e+1}\choose{1}} = e+1$\\
\hline
$\LL\LL$ & $\LL \LL$-flat& ${{e+0}\choose{0}} = 1$\\ 
$\LL\RR$ & $\LL \RR$-flat& ${{e+0}\choose{0}} = 1$\\ 
$\RR\LL$ & $\RR \LL$-flat& ${{e+0}\choose{0}} = 1$\\ 
$\RR\RR$ & $\RR \RR$-flat& ${{e+0}\choose{0}} = 1$\\ 
\hline
$1$ & trivial antilattice& $1$ if $e = 0$\\
\end{tabular}

\bigskip

Equipped with all necessary tools we may now perform enumeration of the pseudo-variety of finite regular antilattices 
and their sub-pseudo-varieties. These sequences can be found in OEIS \cite{OEIS}.

\bigskip

\begin{tabular}{c|c|c|c|c|c}
$n$ & RA & $\RR\RR^c, \RR\LL^c, \LL\RR^c,\LL\LL^c$ & s, $\LL^*$, $^*\RR$, $^*\LL$,$\RR^*$, s* & $\LL\LL, \LL\RR, \RR\LL, \RR\RR$ & 1 \\
\hline
OEIS &A007426&A007425&A000005&A000012&\\
\hline
1 &1&1&1&1&1\\
2 &4&3&2&1&0\\
3 &4&3&2&1&0\\
4 &10&6&3&1&0\\
5 &4&3&2&1&0\\
6 &16&9&4&1&0\\
7 &4&3&2&1&0\\
8 &20&10&4&1&0\\
9 &10&6&3&1&0\\
10 &16&9&4&1&0\\
11 &4&3&2&1&0\\
12 &40&18&6&1&0\\
13 &4&3&2&1&0\\
14 &16&9&4&1&0\\
15 &16&9&4&1&0\\
16 &35&15&5&1&0\\
\end{tabular}

\section*{Acknowledgements}
Michael Kinyon was supported by the Simons Foundation Collaboration Grant 359872.
Work of Toma\v z Pisanski is supported in part by the  ARRS grants P1-0294,J1-7051, N1-0032, and J1-9187.

\end{document}